\newtheorem{theorem}{Theorem}
\newtheorem*{question}{Question}
\newtheorem{lemma}[theorem]{Lemma}
\theoremstyle{definition}
\newtheorem*{definition}{Definition}
\theoremstyle{remark}
\newtheorem*{remarks}{Remarks}
\numberwithin{theorem}{section}
\numberwithin{proposition}{section}
\numberwithin{lemma}{section}
\numberwithin{corollary}{section}
\numberwithin{equation}{section}
\numberwithin{conjecture}{section}
\numberwithin{example}{section}
\setlist[enumerate,1]{before=}
\newcommand{\Z}{\mathbb{Z}}
\newcommand{\R}{\mathbb{R}}
\newcommand{\C}{\mathbb{C}}
\newcommand{\SL}{{\text {\rm SL}}}
\newcommand{\sgn}{\operatorname{sgn}}
\newcommand{\im}{\textnormal{Im}}
\def\H{\mathbb{H}}
\renewcommand{\pmod}[1]{\  \,  \left( \mathrm{mod} \,  #1 \right)}
\renewcommand{\pmod}[1]{\  \,  \left( \mathrm{mod} \,  #1 \right)}
\begin{document}

\title{Higher depth mock theta functions and $q$-hypergeometric series}

\author{Joshua Males}
\address{450 Machray Hall, Department of Mathematics, University of Manitoba, Winnipeg,
	Canada}
\email{joshua.males@umanitoba.ca}
\author{Andreas Mono}
\address{Department of Mathematics and Computer Science, Division of Mathematics, University of Cologne, Weyertal 86-90, 50931 Cologne, Germany}
\email{amono@math.uni-koeln.de}
\author{Larry Rolen}
\address{Department of Mathematics, 1420 Stevenson Center, Vanderbilt University, Nashville, TN 37240}
\email{larry.rolen@vanderbilt.edu}

\maketitle

\begin{abstract}
In the theory of harmonic Maa{\ss} forms and mock modular forms, mock theta functions are distinguished examples which arose from $q$-hypergeometric examples of Ramanujan. Recently, there has been a body of work on higher depth mock modular forms. Here, we introduce distinguished examples of these forms which we call higher depth mock theta functions and develop $q$-hypergeometric expressions for them.
We provide three examples of mock theta functions of depth two, each arising by multiplying a  classical mock theta function with a certain specialization of a universal mock theta function. In addition, we give their modular completions, and relate each to a $q$-hypergeometric series.
\end{abstract}

\section{Introduction}

The study of mock theta functions goes back to Ramanujan, who gave the first examples in his enigmatic last letter to Hardy one hundred years ago. In $2002$, Zwegers \cite{zwegers2008mock} achieved a major breakthrough by recognizing Ramanujan's classical mock theta functions as holomorphic parts of so-called harmonic Maa{\ss} forms, which in turn enabled him to provide non-holomorphic completions of Ramanujan's functions to modular objects. Since then, there has been an enormous amount of interest in, and new results related to, mock theta functions and harmonic Maa{\ss} forms\footnote{A definition can be found in \cite[Chapter 4]{thebook} for instance.}. For example, the landmark paper of Bringmann and Ono \cite{bron10} showed a deep connection between the ranks of partitions, mock theta functions, and harmonic Maa{\ss} forms. To describe this, we define 
\begin{align*}
\mathcal{R}(\alpha, \beta;q) \coloneqq \sum_{n \geq 0} \frac{(\alpha \beta)^n q^{n^2}}{(\alpha q;q)_n (\beta q;q)_n},
\end{align*}
where 
\begin{align*}
(a)_n \coloneqq (a;q)_n &\coloneqq \prod_{j=0}^{n-1} (1-aq^j)
\end{align*}
is the usual $q$-Pochhammer symbol and $q \coloneqq e^{2 \pi i \tau}$ with $\tau \in \H$ throughout. The function $\mathcal{R}$ was studied by Folsom \cite{folsom} and is the three-variable generalization of the partition rank generating function $\mathcal{R}\left(w,w^{-1};q\right)$. In particular, $\mathcal{R}$ is in essence a universal mock theta function in the sense of Gordon and McIntosh \cite{gordmc}. That is, many of Ramanujan's original mock theta functions may be written as specializations of $\mathcal{R}$, up to the addition of a modular form - see \cite[p. 490]{folsom}, and also \cite[Theorem 3.1]{brforh}.

Returning to the results by Bringmann and Ono \cite{bron10}, they showed that $\mathcal{R}\left(\xi,\xi^{-1};q\right)$ evaluated at an odd order root of unity $\xi \neq 1$ is a mock modular form of weight $\frac{1}{2}$ with a certain shadow. The knowledge of the modularity behaviour then allows one to obtain deep arithmetical information on ranks of partitions, including on their asymptotics and exact formulae \cite{Bri,bron06}, as well as congruences that are satisfied, in analogy to the famed Ramanujan congruences. Afterwards, Zagier \cite{zagier} provided a new proof of Bringmann and Ono's result on the rank generating function in an expository paper, and his proof applies to an even order root of unity $\xi \neq 1$ as well.


Consequently, the investigation of $q$-hypergeometric series became a leitmotif in the area of combinatorics as well as in the area of mock modular forms. In many cases, the focus of active research in both of these fields originates in the investigation of some peculiar explicit examples, which shed the first light on a new phenomenon or object. To name one such example, Lovejoy and Osburn \cite{loos12, loos16} offered a new perspective on some of the classical mock theta functions. In short, they discovered four examples of double sum $q$-series which are also mock theta functions. In other words, their work can be regarded as the observation that some (if not all) mock theta functions are double sum $q$-series, which happen to collapse to single sums. Two such functions are $\mathcal{M}_{10}$ and $\mathcal{M}_{17}$ in \cite{loos16}, explicitly given by
\begin{align} \label{eq: loos-ex}
\sum_{n \geq 1} \sum_{n \geq j \geq 1} \frac{ (-1)^j (-1)_{2n} q^{j^2+j+n}}{(q^2;q^2)_{n-j} (q^2;q^2)_{j-1} (1-q^{4j-2})}, \quad \sum_{n \geq 1} \sum_{n \geq j \geq 1} \frac{ (-1)^j (-1)_{2n} q^{j^2-j+n}}{(q^2;q^2)_{n-j} (q^2;q^2)_{j-1} (1-q^{4j-2})}.
\end{align}
Both are expressible in terms of Zwegers' $\mu$-function (defined below) and ratios of classical theta series, c.f. \cite[Theorem 1.6, 1.7]{loos16}, and consequently are mock theta functions.

A second example of a new phenomenon or object is the notion of higher depth mock modular forms, which arose parallel to the work of Lovejoy and Osburn. Roughly speaking, classical mock modular forms can be viewed as preimages of (weakly) holomorphic modular forms under the differential operator 
\begin{align*}
\xi_k \coloneqq 2i\im(\tau)^k\overline{\frac{\partial}{\partial \overline{\tau}}},
\end{align*}
introduced by Bruinier, Funke \cite{bruinierfunke2004}, and which is surjective on the space of weight $k$ harmonic Maa{\ss} forms. Since the kernel of $\xi_k$ contains precisely the holomorphic functions, this idea generalizes directly to mixed mock modular forms, which were first defined in \cite{damuza} (see \cite[Section 13.2]{thebook} for a definition as well). Consequently, weakly holomorphic modular forms can be regarded as mock modular forms of depth zero. Now, one can define mixed mock modular forms of depth $d$ inductively as preimages of mixed mock modular forms of depth $d-1$ under $\xi_k$ (see Section \ref{Section: prelim higher depth} for a precise definition). In other words, resembling the extension of holomorphic modular forms to holomorphic quasimodular forms, higher depth mock modular forms extend the scope of admissible images under $\xi_k$ in a natural fashion. Recently, such forms were connected to black holes by Alexandrov and Pioline \cite{alpi18}, to the Gromov-Witten theory of elliptic orbifolds \cite{brkaro}, and to indefinite theta functions on arbitrary lattices of signature $(r,n-r)$ - see \cite{abmp} for the $r=2$ case and \cite{Naz} for general $r$, each of which are generalizations of Zwegers' groundbreaking thesis \cite{zwegers2008mock} where $r=1$. There are also further applications after relaxing to the notion of higher depth quantum modular forms \cite{BriKaMi,BriKaMi2,males}. 

The $q$-hypergeometric structure of examples of mock theta functions is also crucial to applications in geometry and topology. For instance, Nahm's Conjecture asserts that certain $q$-hypergeometric series are modular if and only if some associated elements of a $K$-theoretic group (the Bloch group) is torsion. Zagier brilliantly proved this in the case of rank $1$ \cite{ZagierNahm}. While it turned out to not be true in higher rank cases (as shown by Vlasenko and Zwegers \cite{VZ}), Calegari, Garoufalidis, and Zagier \cite{CGZ} later showed that one direction of it was true in general. Their proof was closely related to objects in knot theory. Indeed, there are procedures whereby knot diagrams produce $q$-hypergeometric series of a similar shape as Ramanujan's mock theta functions (see, e.g., \cite{Gar}). The proofs of the above cases of Nahm's Conjecture relied on the same sort of asymptotic analysis near roots of unity that Ramanujan employed to discover the mock theta functions. It could have been in families like the one Zagier studeid in \cite{ZagierNahm} that some examples were mock modular, and not just modular, but they didn't happen to turn up there. However, such series in more general contexts may play a role in more exotic modular constructions, motivating studies such as that in this paper. The asymptotics of these knot $q$-series are closely connected to quantum modular forms and the important Volume Conjecture which now seems to be heavily tied to modularity-type properties \cite{ZagQuantum}.

To our knowledge, all higher depth mock modular forms beyond the original sorts considered in Zwegers' thesis are constructed as indefinite theta series, but not via other means which were historically important in the development of the original mock modular forms.  In light of these observations from combinatorics and topology, it is natural to ask the following.
\begin{question}
Are there interesting $q$-hypergeometric depth $\geq2$ mock modular forms?
\end{question}
This paper aims to answer this question and to propose a new structure inductively extending Ramanujan's original mock theta functions to a set of spaces of distinguished higher depth mock modular forms.

Classical mock theta functions are special mock modular forms of depth one, which are in addition required to be of weight $\frac{1}{2}$ or $\frac{3}{2}$, and whose image under the $\xi$-operator is a linear combination of unary theta functions. In this paper, we focus on a new class of objects closely related to higher depth mixed mock modular forms, which we call higher depth mock theta functions. We define them explicitly in Section \ref{Section: prelim higher depth}, and we construct the first examples of depth two mock theta functions. Being more precise, we focus on three of Ramanujan's order three mock theta functions throughout, given by
\begin{align*}
\nu(q) \coloneqq \sum_{n \geq 0} \frac{q^{n(n+1)}}{(-q;q^2)_{n+1}},\qquad
\phi(q) \coloneqq \sum_{n \geq 0} \frac{q^{n^2}}{(-q^2;q^2)_n},\qquad
\rho(q) \coloneqq \sum_{n \geq 0} \frac{q^{n^2}}{(q;q^2)_n}.
\end{align*}
Note that we also cover the cases of the order three mock theta functions $\omega$ and $\psi$ implicitly\footnote{c.f. \cite[Appendix A.2]{thebook}, for instance} by virtue of their simple relationships to those that appear here. Next, we multiply each of the three mock theta functions by a certain specialization of $\mathcal{R}$.

Our main result shows that these first examples of higher depth mock theta functions arise as double-sum $q$-hypergeometric functions. Throughout we let $\zeta \coloneqq e^{2 \pi i z}$ with $z \in \C$, and let
\begin{align*}
\left[{m}\atop{n}\right]_{q} \coloneqq \frac{(q;q)_m}{(q;q)_{m-n}(q;q)_n}
\end{align*}
be the $q$-binomial coefficient. We define
\begin{align*}
	& f_1(z,\tau) \coloneqq (1+\nu(q))\left( 1+\frac{\zeta}{(1-\zeta)(1+q)}\mathcal{R}(\zeta,-q;q^2)  \right),\\
	&	f_2(z,\tau) \coloneqq \phi(q) \left(1 + \frac{\zeta}{(1-\zeta)(1+q^2)} \mathcal{R}(\zeta,-q^2;q^2)\right),\\
	&	f_3(z,\tau) \coloneqq \rho(q) \left(1+ \frac{\zeta}{(1-\zeta)(1-q)} \mathcal{R}(\zeta,q;q^2)\right),
\end{align*}
and have the following result.
\begin{theorem}\label{Theorem: main}
	Let $\zeta$ be a root of unity. Then the functions $f_j$ for $j=1,2,3$ are each mock theta functions of depth two. Furthermore, we have the following representations as double-sum $q$-series.
	\begin{enumerate}[wide, labelwidth=!, labelindent=0pt]
		\item The function $f_1$ can be written as
			\begin{align*}
			f_1(z,\tau) = (1+q^{-1}) \sum_{m,n \geq 0} (-1)^n q^{2n^2}\zeta^{n+m} \left[{m+n}\atop{m}\right]_{q^2} \frac{(-q^{2n}/\zeta;q^2)_m }{(1+q^{2n-1})(-q;q^2)_{m+2n}}.
			\end{align*}
		\item The function $f_2$ can be written as
			\begin{align*}
			f_2(z,\tau) = 2\sum_{m,n \geq 0} (-1)^n q^{2n^2+n}\zeta^{n+m} \left[{m+n}\atop{m}\right]_{q^2} \frac{(-q^{2n+1}/\zeta;q^2)_m}{(1+q^{2n})(-q^2;q^2)_{m+2n}}.
			\end{align*}
		\item The function $f_3$ can be written as
			\begin{align*}
			f_3(z,\tau) = \sum_{m,n \geq 0} (-1)^n q^{2n^2+n}\zeta^{n+m} \left[{m+n}\atop{m}\right]_{q^2} \frac{(-q^{2n+1}/\zeta;q^2)_m(1-q^{-1})}{(1-q^{2n-1})(q;q^2)_{m+2n}}.
			\end{align*}
	\end{enumerate}
	
\end{theorem}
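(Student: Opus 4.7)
The theorem decomposes into two assertions: the three double-sum $q$-hypergeometric identities, and the depth-two mock modularity of each $f_j$. My plan is to handle these in that order, since the identities give convenient alternative representations that facilitate the modular analysis.

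\textbf{The $q$-hypergeometric identities.} First, using the defining series of $\mathcal{R}$ together with the identities $(1-\zeta)(\zeta q^{2};q^{2})_n = (\zeta;q^{2})_{n+1}$ and $(1\pm q^{2k+1})(\pm q^{2k+3};q^{2})_n = (\pm q^{2k+1};q^{2})_{n+1}$, I would rewrite the mock-completion factor appearing in each $f_j$ as a single clean sum; for example, in case $f_1$,
\begin{align*}
1+\frac{\zeta}{(1-\zeta)(1+q)}\mathcal{R}(\zeta,-q;q^{2}) = 1+\sum_{n\geq 0}\frac{(-1)^{n}\zeta^{n+1}q^{2n^{2}+n}}{(\zeta;q^{2})_{n+1}(-q;q^{2})_{n+1}},
\end{align*}
with parallel formulas in cases $f_2, f_3$. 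I would then multiply by the defining series of the relevant mock theta function, using the reindexed form $1+\nu(q) = \sum_{n\geq 0} q^{n^{2}-n}/(-q;q^{2})_n$ in case $f_1$ and the original definitions of $\phi, \rho$ in cases $f_2, f_3$, producing a double sum. The $q$-binomial coefficient $\left[{m+n}\atop{m}\right]_{q^{2}}$ in the target form is then introduced by expanding $1/(\zeta;q^{2})_{n+1}$ via the $q$-binomial theorem as $\sum_{m\geq 0}\left[{m+n}\atop{m}\right]_{q^{2}}\zeta^{m}$, which simultaneously accounts for the $\zeta^{m}$ dependence. Finally, consolidating the two residual Pochhammer denominators into a single $(\pm q;q^{2})_{m+2n}$ (respectively $(q;q^2)_{m+2n}$ in case $f_3$) and extracting the $(-q^{2n+\varepsilon}/\zeta;q^{2})_m$ numerator by a direct reverse Pochhammer identity should yield the stated form. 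The prefactor $(1+q^{-1})$ in (1) arises naturally from re-indexing the $n$-variable to absorb the constant ``$1+$'' from $1+\nu(q)$, which explains its absence in (2) and (3).

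\textbf{Depth-two mock modularity.} Each $f_j$ is visibly the product of a classical mock theta function (depth one, weight $\tfrac{1}{2}$) and a factor of the form $1+\frac{\zeta}{(1-\zeta)(1\pm q^{?})}\mathcal{R}(\zeta,\pm q^{?};q^{2})$. The latter factor, after suitable normalization at the apparent $\zeta$-pole, is itself a mock modular form of depth one whenever $\zeta$ is a root of unity, by arguments analogous to those of Folsom \cite{folsom} and Bringmann-Ono-Zagier \cite{bron10,zagier} for the universal mock theta function. Appealing to the product rule for $\xi_k$ recalled in Section \ref{Section: prelim higher depth}, the product of two depth-one mock modular forms is depth-two mock modular, and the resulting shadow is a sum of products of unary theta series; this precisely places each $f_j$ in the distinguished subclass of higher depth mock theta functions defined there.

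\textbf{Main obstacle.} The main difficulty will be the $q$-hypergeometric bookkeeping: aligning the two $n$-indices, absorbing the ``$1+$'' contributions correctly, and producing simultaneously the $q$-binomial coefficient, the $(-q^{2n+\varepsilon}/\zeta;q^{2})_m$ factor, and the $(1+q^{-1})/(1+q^{2n-1})$ prefactor of case (1). Although the three cases are structurally parallel, they differ in the parity of the $q$-powers inside the Pochhammer symbols, so each will need its own careful verification rather than a single uniform template.
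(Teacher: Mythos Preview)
Your treatment of the depth-two mock modularity is in line with the paper's approach, which also argues by completing each factor separately and appealing to the product structure. The $q$-hypergeometric half of the proposal, however, contains a real gap.

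The paper does \emph{not} obtain the double sums by multiplying out the two series and then expanding $1/(\zeta;q^{2})_{n+1}$ via the $q$-binomial theorem. Instead it first recognizes each factor as a specific basic hypergeometric series---the mock theta function as a ${}_1\phi_1$ (for instance $\phi(q)={}_1\phi_1(q^{2};-q^{2};q^{2};-q)$) and the $\mathcal{R}$-factor as a ${}_2\phi_1$ (Lemma \ref{Lem: mocks to R}, proved via Fine's transformations)---and then invokes Srivastava's product identity \eqref{eqn: product}, which directly outputs a double sum with the intertwined Pochhammer symbols $(\mu;q)_{m+2n}$ and $(-q^{n}z/\zeta;q)_m$. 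The $q$-binomial coefficient in the target arises there from $(\lambda;q)_{m+n}/((q;q)_m(q;q)_n)$ after taking $\lambda=q^{2}$, not from a $q$-binomial expansion of $1/(\zeta;q^{2})_{n+1}$.

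Your route fails precisely at the ``consolidation'' step. After multiplying the mock theta sum (index $k$) by the $\mathcal{R}$-sum (index $n$) and expanding $1/(\zeta;q^{2})_{n+1}=\sum_{m\geq 0}\left[{m+n}\atop m\right]_{q^{2}}\zeta^{m}$, the $k$-sum is completely decoupled from $m,n$: it simply refactors to the original product $\phi(q)\cdot(\text{double sum in }m,n)$. In particular the two Pochhammer denominators you propose to merge, $(-q^{2};q^{2})_k$ and $(-q^{2};q^{2})_{n+1}$, live in different indices and do not combine into $(-q^{2};q^{2})_{m+2n}$; likewise there is no source for the numerator factor $(-q^{2n+1}/\zeta;q^{2})_m$, whose very shape couples $m$ to $n$. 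Producing that coupling is exactly the content of Srivastava's formula, which you would in effect have to reprove. The fix is to route the argument through \eqref{eqn: product} (together with the identification of the $\mathcal{R}$-factor as a ${}_2\phi_1$), after which the three cases reduce to the short computations in Section \ref{Section: theorem proofs}.
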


\begin{remarks}
\begin{enumerate}[wide, labelwidth=!, labelindent=0pt]
\item Each of the three cases suggest choosing $\zeta = q^{\ell}$ for some $\ell \geq 1$ to achieve more cancellation.
\item We emphasize that the $q$-series on the right-hand side of Theorem \ref{Theorem: main} may be viewed combinatorially as counting certain families of partitions, in analogy with the depth one case. For succinctness, we do not provide explicit details here.
\item We highlight that our results and the examples \eqref{eq: loos-ex} of Lovejoy, Osburn have a similar shape. Further evidence of this connection is found in the fact that Zwegers' $\mu$-function essentially provides the completion of both the double-sum $q$-series in equation \eqref{eq: loos-ex} and in Theorem \ref{Theorem: main}. 
\end{enumerate}
\end{remarks}

\begin{theorem}\label{Theorem main: completions}
	Each of the functions $f_j$  has a natural  modular completion (see Section \ref{sec:modcom}).
\end{theorem}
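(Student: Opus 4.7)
The plan is to construct the modular completion of each $f_j$ factor-by-factor, leveraging the fact that both the classical mock theta function factor and the specialization of $\mathcal{R}$ are mock modular forms of depth one whose non-holomorphic completions are already known from Zwegers' thesis \cite{zwegers2008mock} and the literature on the universal mock theta function. Multiplying the completed factors then yields a single modular (non-holomorphic) object whose image under $\xi_k$ is of depth one, confirming that $f_j$ itself is depth two.

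First, I would invoke the standard expressions for $\nu, \phi, \rho$ in terms of Zwegers' $\mu$-function plus explicit theta quotients (as recorded, e.g., in Gordon--McIntosh). Replacing each occurrence of $\mu$ with its completion $\widehat{\mu} = \mu + \tfrac{i}{2} R$, where $R$ denotes Zwegers' non-holomorphic Eichler integral, produces weight $\tfrac{1}{2}$ modular completions $\widehat{\nu}, \widehat{\phi}, \widehat{\rho}$ transforming on a suitable congruence subgroup with an explicit multiplier. The analogous step applies to each $\mathcal{R}(\zeta, \pm q^{k}; q^{2})$: for $\zeta$ a root of unity, this specialization of the universal mock theta function can be expressed via $\mu$ plus theta quotients, and its completion $\widehat{\mathcal{R}}(\zeta, \pm q^{k}; q^{2})$ is again obtained by the substitution $\mu \mapsto \widehat{\mu}$, yielding a mock modular form in $\tau$ whose shadow is a linear combination of unary theta functions.

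Next, I would define
\begin{align*}
\widehat{f}_{1} &\coloneqq (1 + \widehat{\nu})\left( 1 + \tfrac{\zeta}{(1-\zeta)(1+q)}\widehat{\mathcal{R}}(\zeta, -q; q^{2}) \right), \\
\widehat{f}_{2} &\coloneqq \widehat{\phi}\left( 1 + \tfrac{\zeta}{(1-\zeta)(1+q^{2})}\widehat{\mathcal{R}}(\zeta, -q^{2}; q^{2}) \right), \\
\widehat{f}_{3} &\coloneqq \widehat{\rho}\left( 1 + \tfrac{\zeta}{(1-\zeta)(1-q)}\widehat{\mathcal{R}}(\zeta, q; q^{2}) \right).
\end{align*}
Because each factor transforms modularly with matching weight, level, and multiplier data, so does the product. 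Expanding $\widehat{f}_{j} = f_{j} + (\text{non-holomorphic correction})$ and applying $\xi_{1}$ yields a sum of a shadow (unary theta) times a completed mock modular form, plus a similar term with the roles of the two factors reversed. This is precisely a mixed mock modular form of depth one, so $f_{j}$ is a mock theta function of depth two in the sense of Section \ref{Section: prelim higher depth}.

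The main technical obstacle is the bookkeeping: one must track the weights, multiplier systems, and theta-quotient prefactors that appear when rewriting $\nu, \phi, \rho$ and each specialization of $\mathcal{R}$ in terms of $\mu$, and verify that they align so that the two completions combine into one globally modular object. A secondary subtlety is to verify that the two shadows do not conspire to cancel for the root-of-unity specializations under consideration, so that $f_{j}$ is genuinely of depth two rather than collapsing to depth one; this is where the restriction $\zeta \neq 1$ and the interplay with $(1-\zeta)^{-1}$ in the defining formulae becomes essential.
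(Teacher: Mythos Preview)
Your proposal follows essentially the same strategy as the paper: complete each factor separately using Zwegers' theory (the paper packages this as Lemmas~\ref{Lem: R completion} and~\ref{Lem: mock theta completion}) and then combine them to obtain a completion of $f_j$; the paper even writes $\widehat{f}_1$ out by expanding the product and replacing each piece by its completion, just as you suggest. Your outline is in fact somewhat more careful than the paper's short proof, since you explicitly flag the bookkeeping of weights and multipliers and the need to verify that the shadow does not collapse to depth one---points the paper leaves implicit.
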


To our knowledge, these are the first examples of depth two mock theta functions written as $q$-hypergeometric functions.  Invoking different product formulae for single-sum $q$-hypergeometric functions may yield a broader set of higher depth mock theta functions, and a natural question we pose is the following.

\begin{question}
	Are there further examples of higher depth mock theta functions? Are there higher depth mock theta functions that do not arise as products of those of lower depth?
\end{question}

In analogy to the depth one case, it is clear that there is a combinatorial interpretation of the $q$-hypergeometric series described in Theorem \ref{Theorem: main}. Furthermore, the modularity properties yield asymptotics for the coefficients using standard techniques, and we offer the following general question.

\begin{question}
	What are the applications of higher depth mock theta functions?
\end{question}

\section*{Acknowledgments}
We would like to thank Jeremy Lovejoy for insightful discussions on partition-theoretic aspects of this paper, and for valuable comments to Section \ref{Section: prelim higher depth}. In addition, we would like to thank Kathrin Bringmann for useful comments on an earlier version of this paper. We also thank the referee for many helpful suggestions, and Matthias Storzer for pointing out a small error in our description of Nahm's conjecture.

\section{Preliminaries}
In this section we collect some preliminary results and definitions pertinent to the rest of the paper.
\subsection{$q$-hypergeometric series}
We utilize the $q$-hypergeometric series
\begin{align*}
{}_r\phi_{s}\left(a_1,\ldots,a_r;b_1,\ldots,b_s;q;z\right) &\coloneqq \sum_{n \geq 0} \frac{(a_1;q)_n \cdots (a_r;q)_n}{(b_1;q)_n \cdots (b_s;q)_n} \frac{z^n}{(q;q)_n} \left((-1)^n q^{\frac{n(n-1)}{2}}\right)^{s-r+1},
\end{align*}
which enables us to state a central formula in our work.
\begin{lemma}[\protect{\cite[equation (2.10)]{sriva}}]
We have
\begin{align} \label{eqn: product}
{}_1\phi_1(\lambda;\mu;q;-z)&{}_2\phi_1(\lambda,0;\mu;q;\zeta) \notag
\\
&=
\sum_{m,n\geq0}q^{n(n-1)}\frac{(\lambda;q)_{m+n}(-q^nz/\zeta;q)_m(\mu/\lambda;q)_n}{(\mu;q)_{m+2n}(\mu;q)_n}\cdot\frac{\zeta^m}{(q;q)_m}\cdot\frac{(-\lambda z\zeta)^n}{(q;q)_n}.
\end{align}
\end{lemma}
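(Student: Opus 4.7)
The plan is to verify \eqref{eqn: product} by a direct comparison of coefficients of $z^k\zeta^\ell$ on the two sides, reducing the resulting identity to the $q$-Pfaff--Saalsch\"utz summation for terminating balanced ${}_3\phi_2$ series.

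From the definitions, the Cauchy product on the left-hand side has coefficient $(\lambda;q)_k(\lambda;q)_\ell\, q^{\binom{k}{2}} / [(\mu;q)_k(\mu;q)_\ell(q;q)_k(q;q)_\ell]$ in front of $z^k\zeta^\ell$. On the right-hand side, the $q$-binomial theorem gives
\[
(-q^n z/\zeta;q)_m \;=\; \sum_{s=0}^{m}\left[{m}\atop{s}\right]_{q} q^{ns+\binom{s}{2}}(z/\zeta)^s,
\]
so the monomial $z^k\zeta^\ell$ arises precisely for $s=k-n$ and $m=k+\ell-2n$, with $0\leq n\leq\min(k,\ell)$. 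A short direct computation then shows that the combined $q$-exponent $n(n-1)+ns+\binom{s}{2}$ collapses to $\binom{k}{2}+\binom{n}{2}$, which isolates the $q^{\binom{k}{2}}$ factor already visible on the left.

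The remaining identity is a finite single-index sum. Applying the standard relations $(q;q)_k/(q;q)_{k-n} = (-1)^n q^{nk-\binom{n}{2}}(q^{-k};q)_n$ (and its analog for $\ell$) together with the inversion $(\lambda q^{k+\ell-n};q)_n = (-\lambda)^n q^{n(k+\ell-1)-\binom{n}{2}}(q^{1-k-\ell}/\lambda;q)_n$ converts the inner sum into the balanced terminating hypergeometric series ${}_3\phi_2(\mu/\lambda,\, q^{-k},\, q^{-\ell};\,\mu,\, q^{1-k-\ell}/\lambda;\, q;\, q)$, which by the $q$-Pfaff--Saalsch\"utz identity evaluates in closed form to $(\lambda, \mu q^k;q)_\ell / (\mu, \lambda q^k;q)_\ell$. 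Unwinding $(\mu;q)_{k+\ell} = (\mu;q)_k(\mu q^k;q)_\ell$ and the analogous factorization for $\lambda$ then confirms that this matches the left-hand coefficient derived above.

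The principal obstacle is the bookkeeping required to bring the residual sum into precisely the form demanded by the balancing condition of $q$-Pfaff--Saalsch\"utz: one must track signs and $q$-powers carefully through the Pochhammer inversions and verify that the bottom parameter $q^{1-k-\ell}/\lambda$ is indeed the $ab q^{1-n}/c$ of the summation, via the identifications $a=\mu/\lambda$, $b=q^{-k}$, $q^{-n}=q^{-\ell}$, $c=\mu$. Once these identifications are made secure, the remainder of the argument is mechanical. An alternative route would be to first apply Heine's transformation to the ${}_2\phi_1$ factor and combine series without a coefficient expansion, but the Cauchy-product-and-reduce strategy above seems the most direct.
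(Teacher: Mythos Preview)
Your argument is correct: extracting the coefficient of $z^{k}\zeta^{\ell}$ on each side, collapsing the $q$-exponent $n(n-1)+ns+\binom{s}{2}$ to $\binom{k}{2}+\binom{n}{2}$ via $s=k-n$, and then inverting the Pochhammer symbols exactly as you describe does produce the terminating balanced series
\[
{}_3\phi_2\!\left(\mu/\lambda,\,q^{-k},\,q^{-\ell};\ \mu,\ q^{1-k-\ell}/\lambda;\ q;\ q\right),
\]
and the $q$-Pfaff--Saalsch\"utz evaluation $\dfrac{(\lambda;q)_\ell(\mu q^{k};q)_\ell}{(\mu;q)_\ell(\lambda q^{k};q)_\ell}$ together with the factorizations $(\lambda;q)_{k+\ell}=(\lambda;q)_k(\lambda q^{k};q)_\ell$ and $(\mu;q)_{k+\ell}=(\mu;q)_k(\mu q^{k};q)_\ell$ recovers the Cauchy-product coefficient on the left. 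The bookkeeping you flag (signs, $q$-powers, and the balancing check $(\mu/\lambda)\,q^{-k}\,q^{-\ell}\cdot q=\mu\cdot q^{1-k-\ell}/\lambda$) all goes through.

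By contrast, the paper supplies \emph{no} proof of this lemma at all: it is quoted verbatim from Srivastava \cite{sriva} and used as a black box. So your contribution here is genuinely additional---you give a self-contained derivation where the paper defers to the literature. Srivastava's original argument, incidentally, also passes through a Saalsch\"utz-type summation, so your route is in the same spirit as the source the paper cites; the ``Heine transformation first'' alternative you mention at the end is closer to how such product formulae are sometimes derived in the $q$-series literature, but the direct coefficient comparison you chose is cleaner for this particular identity.
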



In addition, we use Fine's function \cite{fine}
\begin{align*}
F(a,b;t;q) \coloneqq \sum_{n \geq 0} \frac{(aq;q)_n}{(bq;q)_n} t^n,
\end{align*}
which has the following transformation properties (among others).
\begin{lemma}[\protect{\cite[eq. (4.4), (6.3), (12.3)]{fine}}]
It holds that
\begin{align}
F(a,b;t;q) &=  \frac{b}{b-at} + \frac{(b-a)t}{(1-bq)(b-at)} F(a,bq;t;q), \label{eqn: fine 4.4}\\
F(a,b;t;q) &=  \frac{1-b}{1-t}F\left(\frac{at}{b},t;b;q\right). \label{eqn: fine 6.3} \\
(1-t)F(0,b;t;q) &= \sum_{n \geq 0} \frac{(bt)^n q^{n^2}}{(bq;q)_n (tq;q)_n}. \label{eqn: fine 12.3}
\end{align}
\end{lemma}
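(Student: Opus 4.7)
The plan is to build $\widehat{f}_j$ as the product of the completions of its two factors and to verify directly that this product is a (non-holomorphic) modular object whose image under $\xi_1$ is a depth one mixed mock modular form. Each $f_j$ factors as $g_j(\tau)\cdot h_j(z,\tau)$, where $g_1(\tau)=1+\nu(q)$, $g_2(\tau)=\phi(q)$, $g_3(\tau)=\rho(q)$ is a (shifted) classical order three mock theta function, and where $h_j$ is $1$ plus a rational prefactor in $\zeta$ times a specialization of the universal mock theta function $\mathcal{R}$. Both factors are depth one mock objects, and my strategy is to complete each to a weight $\tfrac{1}{2}$ harmonic Maa{\ss} (Jacobi) form using Zwegers' $\mu$-function and then take the product.

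First, I would invoke Zwegers' explicit completions of $\nu$, $\phi$, $\rho$ in terms of theta quotients and $\mu(u,v;\tau)$, producing harmonic Maa{\ss} forms $\widehat{g}_j$ of weight $\tfrac{1}{2}$ on an appropriate congruence subgroup with a known shadow that is a unary theta function. Second, using universality of $\mathcal{R}$ in the sense of Gordon--McIntosh \cite{gordmc} (c.f. also \cite{brforh}), I would express each of $\mathcal{R}(\zeta,-q;q^2)$, $\mathcal{R}(\zeta,-q^2;q^2)$, $\mathcal{R}(\zeta,q;q^2)$ in terms of $\mu$; completing $\mu$ to Zwegers' $\widehat{\mu}$ then yields non-holomorphic completions $\widehat{h}_j(z,\tau)$ that transform like weight $\tfrac{1}{2}$ Jacobi forms in $z$. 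Third, I define
\begin{equation*}
\widehat{f}_j(z,\tau) \;\coloneqq\; \widehat{g}_j(\tau)\,\widehat{h}_j(z,\tau).
\end{equation*}
Since each factor transforms modularly of weight $\tfrac{1}{2}$ (on a congruence subgroup whose level may depend on the order of $\zeta$), the product transforms modularly of weight $1$ with a compatible multiplier system.

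To see that $\widehat{f}_j$ is the \emph{natural} completion of a depth two object in the sense of Section \ref{Section: prelim higher depth}, I would write $\widehat{g}_j = g_j + g_j^{-}$ and $\widehat{h}_j = h_j + h_j^{-}$, where $g_j^-$ and $h_j^-$ are the usual non-holomorphic Eichler-type integrals of the respective shadows. Expanding the product gives
\begin{equation*}
\widehat{f}_j \;=\; g_j h_j \;+\; \bigl(g_j h_j^{-} + g_j^{-} h_j\bigr) \;+\; g_j^{-} h_j^{-}.
\end{equation*}
The middle bracket is a sum of (holomorphic mock) $\times$ (Eichler integral), which is exactly the depth one mixed mock structure, while the final term $g_j^{-} h_j^{-}$ is the product of two Eichler integrals, characteristic of depth two. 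Applying $\xi_1$ to $\widehat{f}_j$ then produces a linear combination of products of one of the original mock factors with the shadow of the other (times a power of $\im(\tau)$), confirming that $f_j$ is a mock theta function of depth two as asserted in Theorem \ref{Theorem: main}.

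The main obstacle is the analytic and arithmetic bookkeeping surrounding the meromorphic prefactors $\zeta/\bigl((1-\zeta)(1\pm q)\bigr)$: these introduce apparent poles that must be controlled (which is precisely why $\zeta$ is required to be a root of unity distinct from $1$), and the argument shifts in $\widehat{\mu}$ imposed by the specializations $(\alpha,\beta)=(\zeta,\pm q^{a})$ must be tracked so that the rational prefactor pairs cleanly with the corresponding shift in Zwegers' completion. Once this is verified and the resulting multiplier system and level are pinned down in terms of the order of $\zeta$, the rest of the argument is a formal application of Zwegers' completion machinery and the definition of higher depth mixed mock modular forms from Section \ref{Section: prelim higher depth}.
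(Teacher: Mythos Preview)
Your proposal does not address the stated lemma at all. The statement to be justified consists of three classical identities for Fine's function $F(a,b;t;q)$, namely the contiguous relation \eqref{eqn: fine 4.4}, the symmetry \eqref{eqn: fine 6.3}, and the Rogers--Fine identity \eqref{eqn: fine 12.3}. These are $q$-series identities quoted from Fine's monograph \cite{fine}; the paper does not prove them either, it merely cites them. A proof of this lemma would consist of elementary manipulations of the defining sum $F(a,b;t;q)=\sum_{n\ge 0}\frac{(aq;q)_n}{(bq;q)_n}t^n$: for \eqref{eqn: fine 4.4} one separates the $n=0$ term and shifts the index, for \eqref{eqn: fine 6.3} one iterates a one-step functional equation of $F$, and for \eqref{eqn: fine 12.3} one repeatedly applies such a step to build up the $q^{n^2}$ factor.

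What you have written is instead a sketch of the argument for Theorem~\ref{Theorem main: completions} (and the depth-two claim in Theorem~\ref{Theorem: main}): completing each factor $g_j$ and $h_j$ to $\widehat{g}_j$ and $\widehat{h}_j$ via Zwegers' $\widehat{\mu}$, multiplying, and then reading off the depth-two structure from the product rule for $\xi_1$. That outline is essentially how the paper proceeds for that theorem (via Lemmas~\ref{Lem: R completion} and~\ref{Lem: mock theta completion}), but it has no bearing on the Fine-function lemma above. You need to either supply the short $q$-series verifications of \eqref{eqn: fine 4.4}--\eqref{eqn: fine 12.3} or, as the paper does, simply cite \cite[eq.~(4.4), (6.3), (12.3)]{fine}.
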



\subsection{Higher depth mock modular forms}\label{Section: prelim higher depth}
Here we recall the precise definition of higher depth mock modular forms, following talks of Zagier$-$Zwegers, and work of Westerholt-Raum \cite{raum}. Let $\Gamma \subseteq \SL_2(\Z)$ be a congruence subgroup, and let $M_k(\Gamma)$ be the space of holomorphic modular forms of weight $k$ on $\Gamma$. We say that a function $f \colon \H \to \C$ transforms like a modular form of weight $k$ on $\Gamma$, if for all $\gamma \in \Gamma$ and all $\tau \in \H$ we have 
\begin{align*}
f(\tau) = \begin{cases}
(c\tau+d)^{-k} f(\gamma\tau) & \text{if} \ k \in \Z, \\
\left(\frac{c}{d}\right)\varepsilon_d^{2k}(c\tau+d)^{-k} f(\gamma\tau) & \text{if} \ k \in \frac{1}{2}+\Z,
\end{cases}
\end{align*}
where $\left(\frac{c}{d}\right)$ denotes the extended Legendre symbol, and
\begin{align*}
\varepsilon_d \coloneqq \begin{cases}
1 & \text{if} \ d \equiv 1 \pmod{4}, \\
i & \text{if} \ d \equiv 3 \pmod{4}, \\
\end{cases}
\end{align*}
for odd integers $d$.

\begin{definition}
A \emph{modular completion} of a function $f \colon \H \to \C$ on $\Gamma$ is a function $g \colon \H \to \C$, such that $f+g$ transforms like a modular form of some weight on $\Gamma$.
\end{definition}
Note that a modular completion is not unique. Indeed, one may add a modular form of suitable weight (or a more general automorphic function) to such a completion. Thus, we emphasize that a \emph{natural} modular completion $g$ should provide new insight on the obstruction towards modularity of the initial function $f$. 

\begin{definition}[\protect{\cite[\S 4.2]{raum}}]
Let $\mathcal{M}_k^0(\Gamma) \coloneqq M_k(\Gamma)$. For $d>0$, the space $\mathcal{M}_k^d (\Gamma)$ of \textit{mock modular forms of depth $d$ and weight $k$ on $\Gamma$} is the space of real-analytic functions on $\H$ 
\begin{enumerate}[wide, labelwidth=!, labelindent=0pt]
\item that admit a modular completion of weight $k$ on $\Gamma$,
\item whose images under $\xi_k$ are contained in the space\footnote{The symbol $\otimes$ refers to the usual tensor product of vector spaces.}
\begin{align*}
\sum_{\ell} \overline{M_\ell(\Gamma)} \otimes \mathcal{M}_{k-\ell}^{d-1}(\Gamma),
\end{align*}
\item and that are of at most linear exponential growth towards the cusps of $\Gamma$.
\end{enumerate} 
\end{definition}

We observe that mock modular forms of depth one are precisely the mixed mock modular forms. 
In a similar fashion, we now define higher depth mock theta functions. 
\begin{definition}
Let $\Theta_{\frac{1}{2}}(\Gamma)$, $\Theta_{\frac{3}{2}}(\Gamma)$ be the space of unary theta functions of weight $\frac{1}{2}$ or $\frac{3}{2}$ on $\Gamma$ respectively. In addition, let $\mathbb{M}_{k}^{0}(\Gamma) \coloneqq M_k(\Gamma)$. For $d > 0$, the space $\mathbb{M}_k^d (\Gamma)$ of \textit{mock theta functions of depth $d$ and weight $k$ on $\Gamma$} is the space of real-analytic functions on $\H$ that
\begin{enumerate}[wide, labelwidth=!, labelindent=0pt]
\item that admit a modular completion of weight $k$ on $\Gamma$,
\item have images under $\xi_k$ that are contained in the space
\begin{align*}
\left(\Theta_{\frac{1}{2}}(\Gamma) \otimes \mathbb{M}_{k-\frac{1}{2}}^{d-1}(\Gamma)\right) + \left(\Theta_{\frac{3}{2}}(\Gamma) \otimes \mathbb{M}_{k-\frac{3}{2}}^{d-1}(\Gamma)\right),
\end{align*}
\item are of at most linear exponential growth towards the cusps of $\Gamma$.
\end{enumerate}
\end{definition}

Following Zagier \cite{zagier}, we observe once more that mock theta functions of depth one are precisely the classical mock theta functions multiplied by modular forms. 

\subsection{Modular completions} \label{sec:modcom}
In this section we collect the modular completions of $\nu, \phi, \rho$, and $\mathcal{R}$. Suppose that $u,v \in \C \setminus \left(\Z\tau+\Z\right)$. Zwegers \cite{zwegers2008mock} defined his $\mu$-function by
\begin{align*}
\mu\left(\mathrm{e}^{2\pi i u},\mathrm{e}^{2\pi i v};\tau\right) \coloneqq \frac{\mathrm{e}^{i\pi u}}{-iq^{\frac{1}{8}} \mathrm{e}^{-i \pi v} (q;q)_{\infty}\left(\mathrm{e}^{2\pi i v};q\right)_{\infty}\left(\mathrm{e}^{-2\pi i v}q;q\right)_{\infty}} \sum_{n \in \Z} \frac{(-1)^n q^{\frac{n(n+1)}{2}}\mathrm{e}^{2\pi i n v}}{1-\mathrm{e}^{2\pi i u}q^n},
\end{align*}
where we used the Jacobi triple product identity. To describe a natural modular completion of $\mu$, we recall the error function
\begin{align*}
E(z) \coloneqq 2 \int_{0}^{z} e^{-\pi t^2} dt
\end{align*}
for $z \in \R$, along with
\begin{align*}
R(u;\tau) \coloneqq \sum_{n \in \frac{1}{2}+\Z} \left[\sgn(n) - E\left(\left(n+\frac{\im(u)}{\im(\tau)}\right)\sqrt{2\im(\tau)}\right)\right] (-1)^{n-\frac{1}{2}} e^{-\pi i n^2\tau - 2 \pi i n u}.
\end{align*}

Following Choi \cite{choi}, we additionally consider
\begin{align*}
\mathcal{U}(\alpha,\beta;q) &\coloneqq \sum_{n \geq 1} \left(\alpha^{-1};q\right)_n \left(\beta^{-1};q\right)_n q^n, \\
\mathcal{M}(u,v,\tau) &\coloneqq iq^{\frac{1}{8}} \left(1-\mathrm{e}^{2\pi i u}\right) \mathrm{e}^{\pi i(v-u)} \left(\mathrm{e}^{2\pi i (\tau-u)};q\right)_{\infty} \left(\mathrm{e}^{-2\pi i v};q\right)_{\infty} \mu\left(\mathrm{e}^{2\pi i u},\mathrm{e}^{2\pi i v};\tau\right).
\end{align*}

\begin{lemma}\label{Lem: R completion}
The function $\mathcal{R}(e^{2 \pi i u},e^{2\pi i v};q)$ admits a completion given by
\begin{align*}
&\mathcal{C}(e^{2 \pi i u},e^{2 \pi i v};q) \\
&\coloneqq -\frac{q^{\frac{1}{8}}}{2} \left(1-\mathrm{e}^{2\pi i u}\right) \mathrm{e}^{\pi i(v-u)} \left(\mathrm{e}^{2\pi i (\tau-u)};q\right)_{\infty} \left(\mathrm{e}^{-2\pi i v};q\right)_{\infty}R(u-v;\tau) + \mathcal{U}\left(\mathrm{e}^{2\pi i u},\mathrm{e}^{2\pi i v};q\right).
\end{align*}
\end{lemma}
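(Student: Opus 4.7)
The plan is to reduce the lemma to Zwegers' well-known modular completion of his $\mu$-function, whose non-holomorphic correction is precisely $\tfrac{i}{2} R(u-v;\tau)$. The analytic heart of the argument is an intermediate identity of the shape
$$\mathcal{M}(u,v,\tau) = \mathcal{R}(e^{2\pi i u}, e^{2\pi i v}; q) + \mathcal{U}(e^{2\pi i u}, e^{2\pi i v}; q),$$
so that the universal mock theta function differs from the theta-twisted $\mu$-function $\mathcal{M}$ by the explicitly holomorphic series $\mathcal{U}$. I would either cite this identity from Choi~\cite{choi}, where very similar decompositions are carried out, or verify it directly: expand the Appell--Lerch sum inside $\mathcal{M}$, split the $n$-sum into the pieces $n\geq 0$ and $n<0$, apply the geometric series expansion to $1/(1-e^{2\pi iu}q^n)$, and use Euler's and Jacobi's identities together with the triple product to recognize the resulting double sums as $\mathcal{R}$ (from the $n\geq 0$ part) and $\mathcal{U}$ (from the $n<0$ part, after reindexing and shifting).

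Granted this identity, the rest is bookkeeping with modular transformation laws. I would invoke the classical result that $\hat{\mu}(u,v;\tau) \coloneqq \mu(e^{2\pi i u}, e^{2\pi i v}; \tau) + \tfrac{i}{2} R(u-v;\tau)$ transforms as a Jacobi-type form, and multiply through by the theta-product prefactor appearing in the definition of $\mathcal{M}$. Since this prefactor is built from shifted infinite products with good Jacobi transformation behaviour, its product with $\hat{\mu}$ transforms as a modular form. The resulting completion of $\mathcal{M}$ is then
$$\hat{\mathcal{M}}(u,v,\tau) = \mathcal{M}(u,v,\tau) - \frac{q^{1/8}}{2}(1-e^{2\pi iu}) e^{\pi i(v-u)} (e^{2\pi i(\tau-u)};q)_\infty (e^{-2\pi iv};q)_\infty R(u-v;\tau),$$
where the overall sign arises from $i \cdot \tfrac{i}{2} = -\tfrac{1}{2}$.

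Substituting the intermediate identity $\mathcal{M} = \mathcal{R} + \mathcal{U}$ into the above and rearranging gives $\mathcal{R} + \mathcal{C} = \hat{\mathcal{M}}$, which transforms like a modular form. Hence $\mathcal{C}$ is a modular completion of $\mathcal{R}$. The main obstacle is the first step of establishing $\mathcal{M} = \mathcal{R} + \mathcal{U}$; once that is secured the modularity propagates essentially formally from Zwegers' work. A secondary technical point is tracking the polar locus $u-v \in \Z\tau+\Z$, where $\mu$ and hence the non-holomorphic correction blow up, so that the identity is stated on the correct open domain in $(u,v)$, and verifying that the spurious poles coming from $1-e^{2\pi i u}$ in the prefactor of $\mathcal{M}$ cancel with the corresponding poles of $\mu$.
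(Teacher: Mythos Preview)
Your proposal is correct and follows essentially the same route as the paper: both invoke the identity $\mathcal{M}=\mathcal{R}+\mathcal{U}$ (which the paper attributes to Ramanujan's lost notebook and Choi~\cite{choi}), then apply Zwegers' completion of $\mu$ to obtain the completion of $\mathcal{M}$, and finally shift by $\mathcal{U}$ to get that of $\mathcal{R}$. Your write-up is in fact more explicit about the sign computation and the polar locus than the paper's own proof.
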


\begin{proof}	
Ramanujan's identity \cite[p. 67, entry 3.4.7]{lostnb2} can be rewritten as
\begin{align*}
\mathcal{M}(u,v,\tau) = \mathcal{R}\left(\mathrm{e}^{2\pi i u},\mathrm{e}^{2\pi i v};q\right) + \mathcal{U}\left(\mathrm{e}^{2\pi i u},\mathrm{e}^{2\pi i v};q\right),
\end{align*}
compare \cite[Theorem 4]{choi} as well. By Zwegers' thesis \cite[Theorem 1.11]{zwegers2008mock}, the modular completion of $\mathcal{M}(u,v;\tau)$ is given by
\begin{align*}
C(u,v;\tau) \coloneqq -\frac{q^{\frac{1}{8}}}{2} \left(1-\mathrm{e}^{2\pi i u}\right) \mathrm{e}^{\pi i(v-u)} \left(\mathrm{e}^{2\pi i (\tau-u)};q\right)_{\infty} \left(\mathrm{e}^{-2\pi i v};q\right)_{\infty}  R(u-v;\tau).
\end{align*}
This in turn produces the modular completion of $\mathcal{R}$ as $C(u,v;\tau) + \mathcal{U}(\mathrm{e}^{2\pi i u},\mathrm{e}^{2\pi i v};q) $.
\end{proof} 

The modular completions of Ramanujan's mock theta functions are known by Zwegers' thesis \cite{zwegers2008mock}, and their representations in terms of $\mu$ (see \cite[Appendix A.2.]{thebook}, for example). For convenience, we recall the modular completions here without proof. 
\begin{lemma}\label{Lem: mock theta completion}
We have the following modular completions.
\begin{enumerate}[wide, labelwidth=!, labelindent=0pt]
\item The mock theta function $\nu(q)$ admits a completion given by 
\begin{align*}
-q^{-\frac{1}{2}} R(2\tau;12\tau).
\end{align*}
		
\item The mock theta function $\phi(q)$ admits a completion given by
\begin{align*}
-e^{\frac{\pi i}{8}}q^{-\frac{1}{8}} R\left(-\tau;3\tau+\frac{1}{2}\right).
\end{align*}
		
\item The mock theta function $\rho(q)$ admits a completion given by
\begin{align*}
-\frac{1}{2}q^{-\frac{3}{4}} R(\tau;6\tau).
\end{align*}
\end{enumerate}
\end{lemma}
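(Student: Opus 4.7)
The plan is to reduce each of the three assertions to Zwegers' modular completion of his $\mu$-function. Recall from \cite{zwegers2008mock} that, for $u,v \in \C \setminus (\Z\tau+\Z)$, the function
\[
\widehat{\mu}(u,v;\tau) \coloneqq \mu(u,v;\tau) + \tfrac{i}{2} R(u-v;\tau)
\]
transforms as a weight-$\tfrac{1}{2}$ Jacobi-like form under the modular group. The strategy is therefore to express each of $\nu(q)$, $\phi(q)$, and $\rho(q)$ as a weakly holomorphic weight-$\tfrac{1}{2}$ multiple of a single specialization of $\mu$ whose arguments $(u,v;\tau')$ are tailored so that $u-v$ and $\tau'$ match the $R$-arguments claimed in the lemma.

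Concretely, I would invoke (or re-derive) the classical $\mu$-representations of the three order-three mock theta functions, collected for instance in \cite[Appendix A.2]{thebook}, which take the shape
\begin{align*}
\nu(q) &= c_\nu\, q^{\alpha_\nu}\, \mu\!\left(u_\nu,v_\nu;\, 12\tau\right),\\
\phi(q) &= c_\phi\, q^{\alpha_\phi}\, \mu\!\left(u_\phi,v_\phi;\, 3\tau+\tfrac{1}{2}\right),\\
\rho(q) &= c_\rho\, q^{\alpha_\rho}\, \mu\!\left(u_\rho,v_\rho;\, 6\tau\right),
\end{align*}
with elliptic arguments arranged so that $u_\nu-v_\nu=2\tau$, $u_\phi-v_\phi=-\tau$, and $u_\rho-v_\rho=\tau$, precisely as required to produce the claimed $R(2\tau;12\tau)$, $R(-\tau;3\tau+\tfrac{1}{2})$, and $R(\tau;6\tau)$. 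These $\mu$-identities are in turn proved by manipulating the defining $q$-hypergeometric sums via standard techniques: partial fractions, Fine's transformations (see \eqref{eqn: fine 4.4}--\eqref{eqn: fine 12.3}), and Bailey-pair manipulations collapse each single-sum definition into the Appell--Lerch shape $\sum_n (-1)^n q^{\binom{n+1}{2}} e^{2\pi i n v}/(1-e^{2\pi i u}q^n)$, i.e.\ the numerator of Zwegers' $\mu$.

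Given these identities, substituting $\mu = \widehat{\mu} - \tfrac{i}{2} R$ exhibits each of $\nu,\phi,\rho$ as the difference of a genuinely modular piece (the $\widehat\mu$-term together with its weakly holomorphic theta-quotient prefactor) and an explicit $R$-function term. Combining the $c_j q^{\alpha_j}$ prefactors with the factor $-\tfrac{i}{2}$ arising from $\widehat\mu-\mu$ yields exactly the stated constants $-q^{-1/2}$, $-e^{\pi i/8}q^{-1/8}$, and $-\tfrac{1}{2}q^{-3/4}$; this finishes each of the three cases simultaneously.

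The main obstacle, which is entirely contained in step two, is producing the $\mu$-representations with the precise arguments $(u_j,v_j;\tau'_j)$ and prefactors $c_j q^{\alpha_j}$ matching the stated completions on the nose. This is a careful but standard $q$-series bookkeeping that is handled in Zwegers' thesis and catalogued in \cite[Appendix A.2]{thebook}; once the identities are in hand, the remainder of the proof is a direct substitution.
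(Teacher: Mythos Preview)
Your approach is correct and matches the paper's own treatment: the paper states this lemma \emph{without proof}, simply citing Zwegers' thesis \cite{zwegers2008mock} for the completion $\widehat{\mu}=\mu+\tfrac{i}{2}R$ and \cite[Appendix A.2]{thebook} for the $\mu$-representations of $\nu$, $\phi$, $\rho$, which is exactly the reduction you sketch. There is nothing further to add.
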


\section{{\bf Proof of Theorems  \ref{Theorem: main} and \ref{Theorem main: completions}}}\label{Section: theorem proofs}
We prepare the application of \eqref{eqn: product} with a lemma. First note that it is easy to show that
\begin{align}
1+\nu(q) &= {}_1\phi_1(q^2;-q;q^2;-1), \label{eqn: nu as 1phi1}\\
\phi(q) &= {}_1\phi_1(q^2;-q^2;q^2;-q), \label{eqn: phi as 1phi1}\\
\rho(q) &= {}_1\phi_1(q^2;q;q^2;-q). \label{eqn: rho as 1phi1}
\end{align}
Then we prove the following.
\begin{lemma}\label{Lem: mocks to R}
	We have the identities
	\begin{align*}
	& {}_2\phi_1(q^2,0;-q;q^2;\zeta) = 1 + \frac{\zeta}{(1-\zeta)(1+q)}\mathcal{R}(\zeta,-q;q^2),\\
	& {}_2\phi_1(q^2,0;-q^2;q^2;\zeta) = 1 + \frac{\zeta}{(1-\zeta)(1+q^2)} \mathcal{R}(\zeta,-q^2;q^2),\\
	& {}_2\phi_1(q^2,0;q;q^2;\zeta) = 1+ \frac{\zeta}{(1-\zeta)(1-q)} \mathcal{R}(\zeta,q;q^2).
	\end{align*}
\end{lemma}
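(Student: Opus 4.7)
The plan is to prove all three identities by the same short template, so I will spell out the approach for the first one and indicate that the other two follow identically (with minor parameter changes). The starting observation is that $(0;q^2)_n = 1$ for every $n\geq 0$, hence the general ${}_2\phi_1$ in the statement collapses to
\begin{align*}
{}_2\phi_1(q^2,0;c;q^2;\zeta) \;=\; \sum_{n\geq 0} \frac{\zeta^n}{(c;q^2)_n},
\end{align*}
where $c \in \{-q,-q^2,q\}$ in the three respective cases.

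Next I would separate the $n=0$ summand and factor out the initial factor $(1-c)$ of $(c;q^2)_n$ from the remaining sum, producing
\begin{align*}
\sum_{n\geq 0} \frac{\zeta^n}{(c;q^2)_n} \;=\; 1 \;+\; \frac{\zeta}{1-c}\sum_{n\geq 0}\frac{\zeta^n}{(cq^2;q^2)_n}.
\end{align*}
The remaining series is, by definition, Fine's function $F(0,c;\zeta;q^2)$. I would then invoke identity \eqref{eqn: fine 12.3} of Fine (applied with $q$ replaced by $q^2$, with $b=c$ and $t=\zeta$) to conclude that
\begin{align*}
(1-\zeta)\,F(0,c;\zeta;q^2) \;=\; \sum_{n\geq 0}\frac{(c\zeta)^n q^{2n^2}}{(cq^2;q^2)_n(\zeta q^2;q^2)_n}.
\end{align*}
The key recognition is that the right-hand side is precisely $\mathcal{R}(\zeta,c;q^2)$, by the definition of $\mathcal{R}$ with $\alpha = \zeta$, $\beta = c$ and base $q^2$. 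Substituting back yields
\begin{align*}
\sum_{n\geq 0} \frac{\zeta^n}{(c;q^2)_n} \;=\; 1 \;+\; \frac{\zeta}{(1-\zeta)(1-c)}\,\mathcal{R}(\zeta,c;q^2),
\end{align*}
which with $c=-q$, $c=-q^2$, $c=q$ gives the three claimed identities.

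I do not foresee any substantive obstacle: the proof is essentially the observation that the vanishing parameter $a_2=0$ trivializes the numerator Pochhammer, converting a ${}_2\phi_1$ into an Eulerian-type series that falls directly into the scope of \eqref{eqn: fine 12.3}. The only care needed is bookkeeping of the sign $\pm q^a$ in $c$ when factoring out $(1-c)$ from $(c;q^2)_n$ and matching $\mathcal{R}$'s parameter ordering; these are routine verifications rather than a conceptual hurdle.
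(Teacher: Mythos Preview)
Your argument is correct. Both proofs reduce the ${}_2\phi_1$ to a Fine-type series and finish with identity \eqref{eqn: fine 12.3}, so the overall strategy coincides with the paper's. Your route, however, is more economical: after the elementary index shift that peels off the $n=0$ term and the factor $(1-c)$, you apply \eqref{eqn: fine 12.3} directly with $(b,t)=(c,\zeta)$ and recognize the result as $\mathcal{R}(\zeta,c;q^2)$. The paper instead writes ${}_2\phi_1(q^2,0;c;q^2;\zeta)=F(0,c/q^2;\zeta;q^2)$, uses \eqref{eqn: fine 4.4} to achieve the same index shift, then applies \eqref{eqn: fine 6.3} to swap the roles of $c$ and $\zeta$ before invoking \eqref{eqn: fine 12.3} with $(b,t)=(\zeta,c)$. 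Since the right-hand side of \eqref{eqn: fine 12.3} is symmetric in $b$ and $t$, that swap is unnecessary, and your version dispenses with both \eqref{eqn: fine 4.4} and \eqref{eqn: fine 6.3} entirely.
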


\begin{proof}
To verify the first equation, we first rewrite the left hand side in terms of Fine's function $F$, namely
\begin{align*}
{}_2\phi_1(q^2,0;-q;q^2;\zeta) = F(0,-q^{-1};\zeta;q^2).
\end{align*}
Then by \eqref{eqn: fine 4.4} we find that
\begin{align*}
F(0,-q^{-1}; \zeta;q^2) = 1+\frac{\zeta}{1+q^2} F(0,-q;\zeta; q^2).
\end{align*}
Next, by \eqref{eqn: fine 6.3}, we obtain
\begin{align*}
F(0,\zeta;-q;q^2) = \frac{1-\zeta}{1+q}F(0,-q;\zeta;q^2).
\end{align*}
Thus,
\begin{align*}
F(0,-1;\zeta;q^2) = 1+\frac{\zeta}{1-\zeta} F(0,\zeta;-q;q^2).
\end{align*}
Using \eqref{eqn: fine 12.3}, we arrive at
\begin{align*}
F(0,\zeta;-q;q^2) = \frac{1}{1+q} \left( \sum_{n \geq 0} \frac{(-q\zeta)^n q^{2n^2}}{(\zeta q^2;q^2)_n (-q^3;q^2)_n}\right) = \frac{1}{1+q} \mathcal{R}(\zeta,-q;q^2).
\end{align*}
Hence,
\begin{align*}
{}_2\phi_1(q^2,0;-q;q^2;\zeta) = 1 + \frac{\zeta}{(1-\zeta)(1+q)}\mathcal{R}(\zeta,-q;q^2),
\end{align*}
as claimed.

To verify the second equation, we proceed analogously. Explicitly, we begin with
\begin{align*}
{}_2\phi_1(q^2,0;-q^2;q^2;\zeta) = F(0,-1;\zeta;q^2).
\end{align*}
Then, by equation \eqref{eqn: fine 4.4} we find that
\begin{align*}
F(0,-1; \zeta;q^2) = 1+\frac{\zeta}{1+q^2} F(0,-q^2;\zeta; q^2).
\end{align*}
Next, by \eqref{eqn: fine 6.3}, we obtain
\begin{align*}
F(0,\zeta;-q^2;q^2) = \frac{1-\zeta}{1+q^2}F(0,-q^2;\zeta;q^2).
\end{align*}
Thus,
\begin{align*}
F(0,-1;\zeta;q^2) = 1+\frac{\zeta}{1-\zeta} F(0,\zeta;-q^2;q^2).
\end{align*}
Using \eqref{eqn: fine 12.3} to inspect
\begin{align*}
F(0,\zeta;-q^2;q^2) = \frac{1}{(1+q^2)} \left( \sum_{n \geq 0} \frac{(-q^2\zeta)^n q^{2n^2}}{(\zeta q^2;q^2)_n (-q^4;q^2)_n}\right) = \frac{1}{1+q^2} \mathcal{R}(\zeta,-q^2;q^2).
\end{align*}
This proves the second equation.

For the final equality in the lemma, we see that 
\begin{align*}
{}_2\phi_1(q^2,0;q;q^2;\zeta) = F(0,q^{-1};\zeta;q^2).
\end{align*}
By \eqref{eqn: fine 4.4} this is
\begin{align*}
F(0,q^{-1};\zeta;q^2) = 1+ \frac{\zeta}{1-q} F(0,q;\zeta;q^2).
\end{align*} 
Using \eqref{eqn: fine 6.3}, we then have that
\begin{align*}
F(0,\zeta;q;q^2) = \frac{1-\zeta}{1-q} F(0,q;\zeta;q^2).
\end{align*}
We obtain
\begin{align*}
F(0,q^{-1};\zeta;q^2) = 1+ \frac{\zeta}{1-\zeta}F(0,\zeta;q;q^2).
\end{align*}
Inspecting the final term more closely with \eqref{eqn: fine 12.3}, we find that
\begin{align*}
F(0,\zeta;q;q^2) = \frac{1}{1-q} \left(\sum_{n \geq 0} \frac{(\zeta q)^n q^{2n^2}}{(\zeta q^2;q^2)_n (q^3;q^2)_n} \right) = \frac{1}{1-q} \mathcal{R}(\zeta,q;q^2).
\end{align*}
Combining these yields the third equation, and thus the lemma is proven.
\end{proof}

We now are able to prove our main theorems.

\begin{proof}[Proof of Theorem \ref{Theorem: main}]
We utilize \eqref{eqn: product} to prove the representations of $f_1$, $f_2$, $f_3$ as double-sum $q$-series. We begin with the first case. By Lemma \ref{Lem: mocks to R} and \eqref{eqn: nu as 1phi1} the left-hand side is 
\begin{align*}
{}_1\phi_1(q^2;-q;q^2;-1) {}_2\phi_1(q^2,0;-q;q^2;\zeta).
\end{align*}
 We compute by \eqref{eqn: product} that
\begin{align*}
& {}_1\phi_1(q^2;-q;q^2;-1) {}_2\phi_1(q^2,0;-q;q^2;\zeta) \notag \\
&=  \sum_{m,n\geq0}q^{2n(n-1)}\frac{(q^2;q^2)_{m+n}(-q^{2n}/\zeta;q^2)_m(-q^{-1},q^2)_n}{(-q;q^2)_{m+2n}(-q;q^2)_n}\cdot\frac{\zeta^m}{(q^2;q^2)_m}\cdot\frac{(-q^2\zeta)^n}{(q^2;q^2)_n} \notag\\
&= \sum_{m,n \geq 0} (-1)^n q^{2n^2}\zeta^{n+m} \left[{m+n}\atop{m}\right]_{q^2} \frac{(-q^{2n}/\zeta;q^2)_m(-q^{-1};q^2)_n}{(-q;q^2)_n(-q;q^2)_{m+2n}} \notag\\
&= \sum_{m,n \geq 0} (-1)^n q^{2n^2}\zeta^{n+m} \left[{m+n}\atop{m}\right]_{q^2} \frac{(-q^{2n}/\zeta;q^2)_m (1+q^{-1})}{(1+q^{2n-1})(-q;q^2)_{m+2n}}.
\end{align*}

Next, consider the second case. By Lemma \ref{Lem: mocks to R} and \eqref{eqn: phi as 1phi1}, the left-hand side is
\begin{align*}
{}_1\phi_1(q^2;-q^2;q^2;-q) {}_2\phi_1(q^2,0;-q^2;q^2;\zeta).
\end{align*}
Utilizing \eqref{eqn: product}, we obtain
\begin{align*}
& {}_1\phi_1(q^2;-q^2;q^2;-q) {}_2\phi_1(q^2,0;-q^2;q^2;\zeta) \notag \\
&=  \sum_{m,n\geq0}q^{2n(n-1)}\frac{(q^2;q^2)_{m+n}(-q^{2n+1}/\zeta;q^2)_m(-1;q^2)_n}{(-q^2;q^2)_{m+2n}(-q^2;q^2)_n}\cdot\frac{\zeta^m}{(q^2;q^2)_m}\cdot\frac{(-q^3\zeta)^n}{(q^2;q^2)_n} \notag\\
&= \sum_{m,n \geq 0} (-1)^n q^{2n^2+n}\zeta^{n+m} \left[{m+n}\atop{m}\right]_{q^2} \frac{(-q^{2n+1}/\zeta;q^2)_m(-1;q^2)_n}{(-q^2;q^2)_n(-q^2;q^2)_{m+2n}} \notag\\
&= 2\sum_{m,n \geq 0} (-1)^n q^{2n^2+n}\zeta^{n+m} \left[{m+n}\atop{m}\right]_{q^2} \frac{(-q^{2n+1}/\zeta;q^2)_m}{(1+q^{2n})(-q^2;q^2)_{m+2n}}.
\end{align*}

Finally, we prove the third case. By Lemma \ref{Lem: mocks to R} and \eqref{eqn: rho as 1phi1} the left-hand side is 
\begin{align*}
{}_1\phi_1(q^2;q;q^2;-q) {}_2\phi_1(q^2,0;q;q^2;\zeta).
\end{align*} 
Utilizing \eqref{eqn: product}, we obtain
\begin{align*}
& {}_1\phi_1(q^2;q;q^2;-q) {}_2\phi_1(q^2,0;q;q^2;\zeta) \notag\\
&=  \sum_{m,n\geq0} q^{2n(n-1)}\frac{(q^2;q^2)_{m+n}(-q^{2n+1}/\zeta;q^2)_m(q^{-1};q^2)_n}{(q;q^2)_{m+2n}(q;q^2)_n}\cdot\frac{\zeta^m}{(q^2;q^2)_m}\cdot\frac{(-q^3\zeta)^n}{(q^2;q^2)_n} \notag\\
&= \sum_{m,n \geq 0}(-1)^n q^{2n^2+n}\zeta^{n+m} \left[{m+n}\atop{m}\right]_{q^2} \frac{(-q^{2n+1}/\zeta;q^2)_m(q^{-1};q^2)_n}{(q;q^2)_n(q;q^2)_{m+2n}} \notag\\
&= \sum_{m,n \geq 0} (-1)^n q^{2n^2+n}\zeta^{n+m} \left[{m+n}\atop{m}\right]_{q^2} \frac{(-q^{2n+1}/\zeta;q^2)_m(1-q^{-1})}{(1-q^{2n-1})(q;q^2)_{m+2n}}.
\end{align*}
This proves Theorem \ref{Theorem: main}.
\end{proof}

\begin{proof}[Proof of Theorem \ref{Theorem main: completions}]
Combining Lemmas \ref{Lem: R completion} and \ref{Lem: mock theta completion} immediately yields modular completions of $f_1$, $f_2$, $f_3$ in the obvious fashion. For instance, since
\begin{align*}
f_1(\tau) =  1+\nu(q) + \frac{\zeta}{(1-\zeta)(1+q)}\mathcal{R}(\zeta,-q;q^2) + \nu(q)\frac{\zeta}{(1-\zeta)(1+q)}\mathcal{R}(\zeta,-q;q^2),
\end{align*}
a natural modular completion of $f_1$ is given by
\begin{align*}
-1 -q^{-\frac{1}{2}} R(2\tau;12\tau) + \frac{\zeta}{(1-\zeta)(1+q)}\mathcal{C}(\zeta,-q;q^2) -q^{-\frac{1}{2}} R(2\tau;12\tau)\frac{\zeta}{(1-\zeta)(1+q)}\mathcal{C}(\zeta,-q;q^2).
\end{align*}
The cases of $f_2$ and $f_3$ are completely analogous. This proves that these functions are indeed mock theta functions of depth two.
\end{proof}

\begin{bibsection}
\begin{biblist}
\bib{alpi18}{article}{
   author={Alexandrov, S.},
   author={Pioline, B.},
   title={Black holes and higher depth mock modular forms},
   journal={Comm. Math. Phys.},
   volume={374},
   date={2020},
   number={2},
   pages={549--625},
}

\bib{abmp}{article}{
	AUTHOR = {Alexandrov, S.},
	author={Banerjee, S.},
	author={Manschot, J.},
	author={Pioline, B.},
	TITLE = {Indefinite theta series and generalized error functions},
	JOURNAL = {Selecta Math. (N.S.)},
	FJOURNAL = {Selecta Mathematica. New Series},
	VOLUME = {24},
	YEAR = {2018},
	NUMBER = {5},
	PAGES = {3927--3972},
}


\bib{lostnb2}{book}{
   author={Andrews, G. E.},
   author={Berndt, B. C.},
   title={Ramanujan's lost notebook. Part II},
   publisher={Springer, New York},
   date={2009},
   pages={xii+418},
}

\bib{Bri}{article}{
	AUTHOR = {Bringmann, K.},
	TITLE = {Asymptotics for rank partition functions},
	JOURNAL = {Trans. Amer. Math. Soc.},
	FJOURNAL = {Transactions of the American Mathematical Society},
	VOLUME = {361},
	YEAR = {2009},
	NUMBER = {7},
	PAGES = {3483--3500},
}

\bib{brifomi}{article}{
   author={Bringmann, K.},
   author={Folsom, A.},
   author={Milas, A.},
   title={Asymptotic behavior of partial and false theta functions arising
   from Jacobi forms and regularized characters},
   journal={J. Math. Phys.},
   volume={58},
   date={2017},
   number={1},
   pages={011702, 19},
}

\bib{thebook}{book}{
   author={Bringmann, K.},
   author={Folsom, A.},
   author={Ono, K.},
   author={Rolen, L.},
   title={Harmonic Maass forms and mock modular forms: theory and
	applications},
   series={American Mathematical Society Colloquium Publications},
   volume={64},
   publisher={American Mathematical Society, Providence, RI},
   date={2017},
   pages={xv+391},
}

\bib{brforh}{article}{
   author={Bringmann, K.},
   author={Folsom, A.},
   author={Rhoades, R. C.},
   title={Partial theta functions and mock modular forms as
   $q$-hypergeometric series},
   journal={Ramanujan J.},
   volume={29},
   date={2012},
   number={1-3},
   pages={295--310},
}

\bib{BriKaMi}{article}{
	AUTHOR = {Bringmann, K.},
	author={Kaszian, J.},
	author={Milas, A.},
	TITLE = {Higher depth quantum modular forms, multiple {E}ichler
		integrals, and {$\germ{sl}_3$} false theta functions},
	JOURNAL = {Res. Math. Sci.},
	FJOURNAL = {Research in the Mathematical Sciences},
	VOLUME = {6},
	YEAR = {2019},
	NUMBER = {2},
	PAGES = {Paper No. 20, 41},
}

\bib{BriKaMi2}{article}{
		AUTHOR = {Bringmann, K.},
		author={Kaszian, J.},
		author={Milas, A.},
	TITLE = {Vector-valued higher depth quantum modular forms and higher
		{M}ordell integrals},
	JOURNAL = {J. Math. Anal. Appl.},
	FJOURNAL = {Journal of Mathematical Analysis and Applications},
	VOLUME = {480},
	YEAR = {2019},
	NUMBER = {2},
	PAGES = {123397, 22},
}

\bib{brkaro}{article}{
   author={Bringmann, K.},
   author={Kaszi\'{a}n, J.},
   author={Rolen, Larry},
   title={Indefinite theta functions arising in Gromov-Witten theory of
   elliptic orbifolds},
   journal={Camb. J. Math.},
   volume={6},
   date={2018},
   number={1},
   pages={25--57},
}
		
\bib{brinaz}{article}{
   author={Bringmann, K.},
   author={Nazaroglu, C.},
   title={A framework for modular properties of false theta functions},
   journal={Res. Math. Sci.},
   volume={6},
   date={2019},
   number={3},
   pages={Paper No. 30, 23},
}

\bib{bron06}{article}{
   author={Bringmann, K.},
   author={Ono, K.},
   title={The $f(q)$ mock theta function conjecture and partition ranks},
   journal={Invent. Math.},
   volume={165},
   date={2006},
   number={2},
   pages={243--266},
}

\bib{bron10}{article}{
   author={Bringmann, K.},
   author={Ono, K.},
   title={Dyson's ranks and Maass forms},
   journal={Ann. of Math. (2)},
   volume={171},
   date={2010},
   number={1},
   pages={419--449},
}

	\bib{bruinierfunke2004}{article}{
		author={Bruinier, J. H.},
		author={Funke, J.},
		title={On two geometric theta lifts},
		journal={Duke Math. J.},
		volume={125},
		date={2004},
		pages={45--90},
	}

\bib{CGZ}{webpage}{
    title={Bloch groups, algebraic K-theory, units, and Nahm's Conjecture},
    author={Calegari, F.},
    AUTHOR = {Garoufalidis, S.},
    AUTHOR = {Zagier, D.},
    year={2017},
	url={https://arxiv.org/abs/1712.04887},
}

\bib{choi}{article}{
   author={Choi, Y.-S.},
   title={The basic bilateral hypergeometric series and the mock theta
   functions},
   journal={Ramanujan J.},
   volume={24},
   date={2011},
   number={3},
   pages={345--386},
   issn={1382-4090},
   review={\MR{2784428}},
   doi={10.1007/s11139-010-9269-7},
}

\bib{damuza}{webpage}{
   author={Dabholkar, A.},
   author={Murthy, S.},
   author={Zagier, D.},
   title={Quantum black holes, wall crossing, and mock modular forms},
   url={https://arxiv.org/abs/1208.4074},
   year={2012},
   note={to appear in Cambridge Monographs in Mathematical Physics}
}

\bib{fine}{book}{
   AUTHOR = {Fine, N. J.},
   TITLE = {Basic hypergeometric series and applications},
   SERIES = {Mathematical Surveys and Monographs},
   VOLUME = {27},
   NOTE = {With a foreword by George E. Andrews},
   PUBLISHER = {American Mathematical Society, Providence, RI},
   YEAR = {1988},
   PAGES = {xvi+124},
}

\bib{folsom}{article}{
	AUTHOR = {Folsom, A.},
	TITLE = {Mock and mixed mock modular forms in the lower half-plane},
	JOURNAL = {Arch. Math. (Basel)},
	FJOURNAL = {Archiv der Mathematik},
	VOLUME = {107},
	YEAR = {2016},
	NUMBER = {5},
	PAGES = {487--498},
}

\bib{Gar}{article}{
   author={Garoufalidis, S.},
   author={L\^{e}, T. T. Q.},
   title={Nahm sums, stability and the colored Jones polynomial},
   journal={Res. Math. Sci.},
   volume={2},
   date={2015},
   pages={Art. 1, 55},
}	
		
\bib{gordmc}{book}{
	AUTHOR = {Gordon, B.}
	author={ McIntosh, R. J.},
	TITLE = {A survey of classical mock theta functions},
	BOOKTITLE = {Partitions, {$q$}-series, and modular forms},
	SERIES = {Dev. Math.},
	VOLUME = {23},
	PAGES = {95--144},
	PUBLISHER = {Springer, New York},
	YEAR = {2012},
}

\bib{hardy}{article}{
	author={Hardy, G. H.},
	Title={A chapter from Ramanujan's notebook},
	Journal={Proc. Cambridge Philos. Soc.},
	Volume={21},
	pages={492-503},
	year={1923},
}

\bib{loos12}{article}{
   author={Lovejoy, J.},
   author={Osburn, R.},
   title={$q$-hypergeometric double sums as mock theta functions},
   journal={Pacific J. Math.},
   volume={264},
   date={2013},
   number={1},
   pages={151--162},
}

\bib{loos16}{article}{
   author={Lovejoy, J.},
   author={Osburn, R.},
   title={Mock theta double sums},
   journal={Glasg. Math. J.},
   volume={59},
   date={2017},
   number={2},
   pages={323--348},
}

\bib{males}{article}{
	AUTHOR = {Males, J.},
	TITLE = {A family of vector-valued quantum modular forms of depth two},
	JOURNAL = {Int. J. Number Theory},
	FJOURNAL = {International Journal of Number Theory},
	VOLUME = {16},
	YEAR = {2020},
	NUMBER = {1},
	PAGES = {29--64},
}

\bib{Naz}{article}{
	AUTHOR = {Nazaroglu, C.},
	TITLE = {{$r$}-tuple error functions and indefinite theta series of
		higher-depth},
		JOURNAL = {Commun. Number Theory Phys.},
		FJOURNAL = {Communications in Number Theory and Physics},
		VOLUME = {12},
		YEAR = {2018},
		NUMBER = {3},
		PAGES = {581--608},
		}
		
\bib{sriva}{article}{
   AUTHOR = {Srivastava, H. M.},
   TITLE = {Some formulas of {S}rinivasa {R}amanujan involving products of
    hypergeometric functions},
   JOURNAL = {Indian J. Math.},
   FJOURNAL = {Indian Journal of Mathematics},
   VOLUME = {29},
   YEAR = {1987},
   NUMBER = {1},
   PAGES = {91--100},
}

\bib{VZ}{article}{
   author={Vlasenko, M.},
   author={Zwegers, S.},
   title={Nahm's conjecture: asymptotic computations and counterexamples},
   journal={Commun. Number Theory Phys.},
   volume={5},
   date={2011},
   number={3},
   pages={617--642},
}


\bib{raum}{article}{
	AUTHOR = {Westerholt-Raum, M.},
	TITLE = {{${H}$}-harmonic {M}aa\ss -{J}acobi forms of degree 1},
	JOURNAL = {Res. Math. Sci.},
	FJOURNAL = {Research in the Mathematical Sciences},
	VOLUME = {2},
	YEAR = {2015},
	PAGES = {Art. 12, 34},
}

\bib{ZagQuantum}{article}{
   author={Zagier, D.},
   title={Quantum modular forms},
   conference={
      title={Quanta of maths},
   },
   book={
      series={Clay Math. Proc.},
      volume={11},
      publisher={Amer. Math. Soc., Providence, RI},
   },
   date={2010},
   pages={659--675},
}

\bib{zagier}{incollection}{
	AUTHOR = {Zagier, D.},
	TITLE = {Ramanujan's mock theta functions and their applications (after
		{Z}wegers and {O}no-{B}ringmann)},
	NOTE = {S\'{e}minaire Bourbaki. Vol. 2007/2008},
	JOURNAL = {Ast\'{e}risque},
	FJOURNAL = {Ast\'{e}risque},
	NUMBER = {326},
	YEAR = {2009},
	PAGES = {Exp. No. 986, vii--viii, 143--164 (2010)},
}

\bib{ZagierNahm}{article}{
   author={Zagier, D.},
   title={The dilogarithm function},
   conference={
      title={Frontiers in number theory, physics, and geometry. II},
   },
   book={
      publisher={Springer, Berlin},
   },
   date={2007},
   pages={3--65},
}

			\bib{zwegers2008mock}{article}{
				author={Zwegers, S.},
				title={Mock theta functions},
				date={2002},
				journal={Ph.D. Thesis, Universiteit Utrecht},
			}
			
\end{biblist}
\end{bibsection}

\end{document}